\documentclass[12pt]{article}
\usepackage{graphicx}
\usepackage{amsmath, amsthm}
\usepackage{amsfonts}
\usepackage{amssymb}
\usepackage{enumitem}
\usepackage{url}
\usepackage{color,soul}
\usepackage{color}
\usepackage[dvipsnames]{xcolor}
\newtheorem{theorem}{Theorem}
\newtheorem{lemma}[theorem]{Lemma}
\newtheorem{cor}[theorem]{Corollary}
\newtheorem{prop}[theorem]{Proposition}

\newtheorem*{reduc1}{Operation I}
\newtheorem*{reduc2}{Operation II}

\newtheorem{example}{Example}

\newtheorem*{thm:Tpath}{Theorem \ref{thm:Tpath}}
\newtheorem*{thm:Cpath}{Theorem \ref{thm:Cpath}}

\theoremstyle{definition}

\newcommand{\mc}{\mathcal}


\title{Hamilton Paths in Dominating Graphs of Trees and Cycles}
\author{Kira Adaricheva \thanks{Department of Mathematics, Hofstra University,
Hempstead, NY 11549, USA (kira.adaricheva@hofstra.edu)}
\and Heather Smith Blake \thanks{Department of Mathematics and Computer Science,
Davidson College, Davidson, NC 28035, USA (hsblake@davidson.edu)}
\and Chassidy Bozeman \thanks{Department of Mathematics and Statistics,
Mount Holyoke College, South Hadley, MA 01075, USA (cbozeman@mtholyoke.edu)}
\and Nancy E. Clarke \thanks{Department of Mathematics and Statistics, Acadia University,
Wolfville, NS B4P 2R6, Canada (nancy.clarke@acadiau.ca)}
\and Ruth Haas \thanks{Department of Mathematics, University of Hawaii at
M\=anoa, Honolulu, HI 96822, USA (rhaas@hawaii.edu)}
\and Margaret-Ellen Messinger \thanks{Department of Mathematics and Computer Science,
Mount Allison University, Sackville, NB E4L 1E2, Canada (mmessinger@mta.ca)}
\and Karen Seyffarth \thanks{Department of Mathematics and Statistics, University
of Calgary, Calgary, Alberta T2N 1N4, Canada (kseyffar@ucalgary.ca)}}

\begin{document}
\maketitle

\begin{abstract}
The dominating graph of a graph $H$ has as its vertices all dominating sets of $H$,
with an edge between two dominating sets if one can be obtained from the other by
the addition or deletion of a single vertex of $H$.
In this paper we prove that the dominating graph of any tree has a Hamilton path. 
We also show how a result about binary strings leads to a proof that
the dominating graph of a cycle on $n$ vertices has a Hamilton path 
if and only if $n\not\equiv 0 \pmod 4$. 
\end{abstract}

\section{Introduction}

Let $H$ be a graph with vertex set $V(H)$.
A {\it dominating set} of $H$ is a set $D \subseteq V(H)$ such 
that every vertex of $V(H)\backslash D$ is adjacent to a vertex of $D$.  
The {\em dominating graph} of $H$, $\mc{D}(H)$, is the graph whose vertices are all the 
dominating sets of $H$; 
if $X$ and $Y$ are distinct vertices of $\mc{D}(H)$, then there is an edge between $X$
and $Y$ if and only if $Y$ can be obtained from $X$ by adding a vertex of $H$ to $X$ or 
by deleting a vertex from $X$.
Note that we use the same label for a vertex of $\mc{D}(H)$ as for the corresponding 
dominating set of $H$ because it is clear from context whether we are 
referring to $H$ or $\mc{D}(H)$.

The graph $\mc{D}(H)$ is the reconfiguration graph of dominating sets of $H$ under the 
{\em token addition/removal} (TAR) model, first considered in~\cite{HS14}.
For any graph $H$ and any integer $k$, $1\leq k\leq |V(H)|$, the {\em $k$-dominating graph} 
of $H$, denoted $\mc{D}_k(H)$, is the subgraph of $\mc{D}(H)$ induced by the dominating 
sets of $H$ with 
cardinality at most $k$.
When $k=|V(H)|$, then $\mc{D}_k(H)=\mc{D}(H)$.
There have been numerous papers about dominating graphs and their subgraphs, 
the $k$-dominating graphs.
Most of these focus on conditions on $k$ that ensure that $\mathcal{D}_k(H)$ is connected.  
Two recent surveys of reconfiguration of dominating sets are~\cite{us} and~\cite{ChungBook}.

There has been considerable interest in reconfiguration and reconfiguration graphs of other 
well known graph structures and operations, including independent sets, cliques,
vertex covers of graphs, zero forcing, and graph coloring.  
Nishimura \cite{Nishimura-survey} examines reconfiguration from an algorithmic 
perspective and considers complexity questions in a wide range of 
reconfiguration settings.
Reconfiguration of graph coloring problems and dominating set problems are surveyed
in a recent paper of Mynhardt and Nasserasr \cite{ChungBook}. 

In this paper we investigate Hamilton cycles and Hamilton paths in dominating graphs, 
properties that have been studied for other types of reconfiguration problems. 
A Hamilton path or Hamilton cycle in a reconfiguration graph is a 
{\em combinatorial Gray code}, that is, a listing of all the objects in a set so
that successive objects differ in some prescribed minimal way. 
Several recent papers give conditions for the existence of Gray codes for all
colorings with $k$ or fewer colors of the following classes of graphs:
trees~\cite{choo}, bipartite graphs~\cite{Celaya}, and 2-trees~\cite{cavers}.
A forthcoming survey by M\"{u}tze gives a wide variety of combinatorial Gray code
results~\cite{mutze}.

We consider only finite simple graphs.
For a graph $H$, we use the notation $P=x_1, x_2, \ldots, x_j$, where $j\geq 3$,
to denote a path $P$ in $H$ where $\{x_1, x_2, \ldots, x_j\}$ is a subset of the vertices of $H$.
An edge $e$, i.e., a path with two vertices $x$ and $y$, is written simply as $e=xy$.
For basic graph theory notation and terminology not defined here, see~\cite{BondyandMurty}. 

We begin with the question of which dominating graphs have Hamilton cycles.
It is clear that if $H$ is a graph, then its dominating graph, $\mc{D}(H)$, is bipartite,
with the bipartition based on the parity of the dominating sets of $H$. 
It follows that if $\mc{D}(H)$ has a Hamilton cycle, then $\mc{D}(H)$ has an even number 
of vertices (equivalently, the number of dominating sets of $H$ is even).
By contrast, we have the following unpublished result of Brouwer~\cite{BCS},
an expanded proof of which is included in~\cite{us}.

\begin{lemma}\cite{BCS}\label{lem:Brouwer} 
The number of dominating sets of any finite graph is odd.
\end{lemma}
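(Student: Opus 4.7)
The plan is to prove the lemma by induction on the number of edges of $H$. For the base case, $H$ has no edges: every vertex is isolated and so must lie in every dominating set, so the unique dominating set is $V(H)$, giving $d(H) = 1$, which is odd.

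For the inductive step, let $H$ have at least one edge, fix an edge $e = uv$, and set $H' = H - e$; by the inductive hypothesis, $d(H')$ is odd. Since $H \supseteq H'$, every dominating set of $H'$ is a dominating set of $H$, so $d(H) - d(H') = \Delta$, where $\Delta$ counts the subsets $S$ that dominate $H$ but not $H'$. A short case analysis (using that adding the edge $uv$ to $H'$ changes only the closed neighborhoods of $u$ and $v$) shows each such $S$ contains exactly one endpoint of $e$ and is of one of two symmetric types; writing
\[
\mathcal{A} = \{S : u \notin S,\ v \in S,\ N_H(u) \cap S = \{v\},\ S \text{ dominates } H\},
\]
with $\mathcal{B}$ defined symmetrically by swapping the roles of $u$ and $v$, one has $\Delta = |\mathcal{A}| + |\mathcal{B}|$.

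Each family admits a clean reformulation. The correspondence $S \mapsto S \setminus \{v\}$ identifies $\mathcal{A}$ with the collection of subsets $S' \subseteq V \setminus N_H[u]$ such that $S'$ dominates $V \setminus N_H[v]$ in $H$; an analogous statement holds for $\mathcal{B}$ with $u$ and $v$ interchanged. These are counts of ``partial'' dominating sets on strictly smaller induced subgraphs (namely on $V \setminus N_H[u]$ and $V \setminus N_H[v]$), which is where one aims to invoke the inductive hypothesis through further edge-deletion reductions.

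The main obstacle is showing $\Delta = |\mathcal{A}| + |\mathcal{B}|$ is even, from which $d(H) \equiv d(H') \equiv 1 \pmod 2$. The naive involution $S \mapsto S \triangle \{u, v\}$ does not map $\mathcal{A}$ onto $\mathcal{B}$ in general, because the constraints on the neighborhoods of $u$ and $v$ in $S$ are asymmetric when $u$ and $v$ have different neighborhoods. A more refined involution, or a direct parity computation that exploits the inductive hypothesis on the two relevant induced subgraphs to show $|\mathcal{A}| \equiv |\mathcal{B}| \pmod 2$, is the key technical step still to be completed.
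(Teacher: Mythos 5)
Your reduction is sound as far as it goes: deleting the edge $e=uv$ changes only the closed neighborhoods of $u$ and $v$, so a set that dominates $H$ but not $H'=H-e$ fails in $H'$ to dominate exactly one of $u,v$, and your description of $\mathcal{A}$ and $\mathcal{B}$ and the identity $d(H)-d(H')=|\mathcal{A}|+|\mathcal{B}|$ are all correct. But the argument stops at exactly the point where all of the content lies: you state yourself that proving $|\mathcal{A}|+|\mathcal{B}|$ is even is ``the key technical step still to be completed,'' and you correctly observe that the obvious involution $S\mapsto S\mathbin{\triangle}\{u,v\}$ fails. As written this is not a proof of the lemma; it is a reduction of the lemma to an unproved parity claim that is essentially as hard as the lemma itself (indeed, the lemma immediately implies that claim, since it forces $d(H)$ and $d(H')$ both to be odd). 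So any completion must contain an idea genuinely beyond the bookkeeping done so far.

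For comparison, the paper does not prove this statement at all: it cites the unpublished argument of Brouwer, Csorba and Schrijver, which is a direct parity computation with no induction on edges. The key observation there is that the relation ``$B\cap N[A]=\emptyset$'' on pairs of vertex subsets is symmetric (it is equivalent to $A\cap N[B]=\emptyset$). Counting pairs $(A,B)$ with $B\subseteq V\setminus N[A]$ gives $\sum_{A}2^{|V\setminus N[A]|}$, which modulo $2$ equals the number of $A$ with $N[A]=V$, i.e., the number of dominating sets; on the other hand, pairing $(A,B)$ with $(B,A)$ shows this count is congruent to the number of diagonal pairs $(A,A)$ with $A\subseteq V\setminus N[A]$, of which there is exactly one, $A=\emptyset$. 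Hence the number of dominating sets is odd. It is worth noting that the same symmetric double count, restricted to pairs $(A,B)$ with $A\subseteq V\setminus N_H[u]$ and $B\subseteq V\setminus N_H[v]$, yields $|\mathcal{A}|\equiv|\mathcal{B}|\pmod 2$ via your reformulation of $\mathcal{A}$ and $\mathcal{B}$ as partial domination counts, and so would close your gap --- but once that tool is available, the edge-deletion induction is superfluous.
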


Combining Brouwer's result with the observation that a bipartite graph
with a Hamilton cycle must have an even number of vertices gives a short
answer to the question of which dominating graphs have Hamilton cycles.

\begin{prop}\cite{us}
For any graph $H$, the dominating graph ${\mc D}(H)$ has no Hamilton cycle.
\end{prop}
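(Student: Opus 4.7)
The proof essentially writes itself from the setup immediately preceding the statement, so my plan is just to make explicit the three ingredients the authors have lined up and check that they fit together. The first step is to confirm that $\mc{D}(H)$ is bipartite: partition the vertex set of $\mc{D}(H)$ according to the parity of the size of the corresponding dominating set of $H$. Since every edge of $\mc{D}(H)$ corresponds to the addition or deletion of a single vertex, its endpoints lie on opposite sides of this partition, so this is indeed a proper 2-coloring.

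Next I would invoke the elementary fact that any Hamilton cycle in a bipartite graph alternates between the two color classes, and hence must have even length; in particular, a bipartite graph with a Hamilton cycle has an even number of vertices. Applying this contrapositively to $\mc{D}(H)$, it suffices to show that $|V(\mc{D}(H))|$ is odd. But $|V(\mc{D}(H))|$ is exactly the number of dominating sets of $H$, which is odd by Lemma \ref{lem:Brouwer}. Therefore $\mc{D}(H)$ has no Hamilton cycle.

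There is no real obstacle here: the only nontrivial input is Brouwer's parity lemma (Lemma \ref{lem:Brouwer}), which is already stated and attributed. The proof will be at most a few lines, and my write-up will simply thread together the bipartiteness observation, the even-order consequence of bipartiteness for Hamilton cycles, and the odd count of dominating sets.
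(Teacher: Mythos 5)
Your proposal is correct and matches the paper's argument exactly: the paper also combines the bipartiteness of $\mc{D}(H)$ (by parity of dominating sets) with Brouwer's parity result (Lemma~\ref{lem:Brouwer}) to conclude that $\mc{D}(H)$ has an odd number of vertices and hence no Hamilton cycle. Nothing is missing.
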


\noindent
Henceforth, we focus our attention on Hamilton paths in dominating graphs.  

In \cite{us} we show that Hamilton paths exist in the dominating graphs of
certain classes of graphs.
Specifically, we prove the following.

\begin{theorem}\cite{us}\label{thm:oldresults}
Let $m$ and $n$ be positive integers.
Then $\mc{D}(K_n)$ has a Hamilton path, $\mc{D}(P_n)$ has a Hamilton path, and 
$\mc{D}(K_{m,n})$ has a Hamilton path if and only if $m$ is odd.
\end{theorem}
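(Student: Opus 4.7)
The theorem has three parts, which I would handle separately and in order of increasing difficulty.

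For $\mc{D}(K_n)$, the key observation is that every nonempty subset of $V(K_n)$ is a dominating set, so $\mc{D}(K_n)$ is isomorphic to the $n$-cube $Q_n$ with the vertex corresponding to $\emptyset$ deleted. The standard reflected binary Gray code supplies a Hamilton path of $Q_n$ starting at $\emptyset$; deleting this initial vertex yields a Hamilton path of $\mc{D}(K_n)$.

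For $\mc{D}(P_n)$, I would proceed by induction on $n$, with small cases handled directly. Writing $P_n = v_1 v_2 \cdots v_n$, I would partition the dominating sets of $P_n$ according to their pattern on the last two or three vertices, so that each block corresponds (after a mild adjustment) to dominating sets of a shorter path, to which the inductive hypothesis applies. Encoding each dominating set as a binary string and recalling the characterization that these are exactly the strings avoiding three consecutive $0$s in the interior and two consecutive $0$s at either end, one can hope to concatenate the inductive Hamilton paths of the blocks, joined at successive endpoints that differ in a single coordinate. The main obstacle is arranging for the inductive pieces to end at prescribed vertices, which forces the inductive statement to be strengthened: one must control the start and end vertices of the Hamilton paths produced at each stage.

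For $\mc{D}(K_{m,n})$ with bipartition $(A, B)$ of sizes $m$ and $n$, the dominating sets are exactly the sets $X \cup Y$ with $\emptyset \neq X \subseteq A$ and $\emptyset \neq Y \subseteq B$, together with the two exceptional dominating sets $A$ and $B$. For necessity, interpreted via the symmetry $K_{m,n} \cong K_{n,m}$ as requiring at least one of $m, n$ to be odd, I would show by direct count that when both $m$ and $n$ are even, the two parts of the bipartite graph $\mc{D}(K_{m,n})$ differ in size by $3$, precluding a Hamilton path. For sufficiency, I would combine Gray codes on the nonempty subsets of $A$ and of $B$ into a zig-zag traversal of $(2^A \setminus \{\emptyset\}) \times (2^B \setminus \{\emptyset\})$, then insert the exceptional sets $A$ and $B$ at carefully chosen transition points (noting that $A$ is adjacent in $\mc{D}(K_{m,n})$ precisely to the sets $A \cup \{b\}$ for $b \in B$, and similarly for $B$). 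The main challenge in this part is placing these two exceptional sets so as to preserve the single-vertex-change condition along the path, which is where the parity of $m$ enters the construction.
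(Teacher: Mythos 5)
A preliminary remark: the paper does not prove this statement itself --- it is imported from~\cite{us} --- so there is no in-paper argument to measure yours against; what follows assesses the proposal on its own. Of the three assertions, only the first is actually proved. The $\mc{D}(K_n)$ argument is complete and correct: the dominating sets of $K_n$ are exactly the nonempty subsets of $V(K_n)$, the reflected Gray code is a Hamilton path of the $n$-cube beginning at the all-zeros string, and truncating its first vertex gives a Hamilton path of $\mc{D}(K_n)$. The necessity half of the $K_{m,n}$ assertion is also essentially complete under your (reasonable) reading that ``$m$ is odd'' means ``at least one part has odd size'': summing $(-1)^{|D|}$ over all dominating sets gives $(-1)(-1)+(-1)^m+(-1)^n=3$ when $m$ and $n$ are both even, so the two parity classes of the bipartite graph $\mc{D}(K_{m,n})$ differ in size by $3$ and no Hamilton path exists.

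The other two components are strategies rather than proofs, and in each case the step you defer is the entire difficulty. For $\mc{D}(P_n)$ you say one can ``hope to concatenate'' inductive Hamilton paths and that the induction ``must be strengthened'' to control endpoints, but you never state the strengthened hypothesis or verify that it propagates; that is where all the work lies. (Incidentally, this case falls out of the present paper's own machinery: Operation~II deletes the last two vertices of a path, so Lemma~\ref{thm:reduction2} reduces $\mc{D}(P_n)$ to $\mc{D}(P_{n-2})$ with no endpoint bookkeeping, and the result is also a special case of Theorem~\ref{thm:Tpath}.) For the sufficiency half of the $K_{m,n}$ case, note that two distinct neighbors $A\cup\{b\}$ and $A\cup\{b'\}$ of the exceptional set $A$ are never adjacent in $\mc{D}(K_{m,n})$, so $A$ cannot simply be spliced into an edge of a Hamilton path of the product part: it must either be an endpoint of the whole path or sit between two of its pairwise non-adjacent neighbors, which breaks the zig-zag traversal of $(2^A\setminus\{\emptyset\})\times(2^B\setminus\{\emptyset\})$ into segments whose endpoints are forced into prescribed positions and parity classes (for instance, when $m$ is odd the set $A$ has odd cardinality while both endpoints of any Hamilton path must lie in the surplus even-cardinality class, so $A$ is forced to be internal). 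You correctly identify this placement problem as ``the main challenge'' and then stop; until an explicit placement is exhibited and checked, the sufficiency direction remains unproved.
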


\noindent
In this paper we explore the dominating graphs of trees and prove the following.

\begin{theorem}\label{thm:Tpath}
For any tree $T$, ${\mc D}(T)$ has a Hamilton path. 
\end{theorem}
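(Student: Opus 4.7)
The plan is to prove Theorem~\ref{thm:Tpath} by strong induction on $n = |V(T)|$. Small base cases are handled directly, and the path case is already covered by Theorem~\ref{thm:oldresults}. For the inductive step, my strategy is to identify a pendant substructure in $T$ whose removal yields a smaller tree $T'$, apply induction to $T'$ to obtain a Hamilton path in $\mc{D}(T')$, and then lift this to a Hamilton path in $\mc{D}(T)$ by interleaving it with the ``new'' dominating sets that involve the deleted vertices.

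To identify a reducible substructure, root $T$ at a vertex far from a chosen leaf, and consider a deepest non-leaf vertex $v$, so that every child of $v$ is a leaf. Two cases arise, suggesting two reduction operations. In the first, $v$ has at least two leaf children $\ell_1, \dots, \ell_k$, and we strip off all but one. Dominating sets of $T$ split cleanly according to whether $v \in D$: if $v \in D$ then any subset of $\{\ell_1, \dots, \ell_k\}$ is allowed (a $k$-cube's worth of choices that admits an internal Gray code), while if $v \notin D$ then every $\ell_i$ is forced into $D$. A Hamilton path can be built by weaving these layers together using the internal cube Gray code combined with the inductive Hamilton path. In the second case, $v$ has exactly one leaf child $\ell$, and we set $T' = T - \ell$. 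Dominating sets of $T$ then partition into those containing $\ell$ (for which $D \bs \{\ell\}$ need only dominate $V(T')\bs\{v\}$) and those not containing $\ell$ (which forces $v \in D$, giving an ordinary dominating set of $T'$); the two classes are connected by transitions that add or remove $\ell$.

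The main obstacle is that a bare existence statement is too weak to serve as the inductive hypothesis. To ensure that the cross-layer transitions can be glued into a single Hamilton path in $\mc{D}(T)$, the inductive Hamilton path in $\mc{D}(T')$ must have endpoints of a prescribed form (for instance, containing or avoiding $v$, or differing from $V(T')$ in a prescribed coordinate). I would therefore strengthen the statement to assert the existence of a Hamilton path in $\mc{D}(T)$ between canonical prescribed endpoints, and prove that stronger version by induction. The technical core of the proof is the bookkeeping needed to match endpoint types across all subcases of both reduction operations; in particular, Operation~II is likely to require further case analysis depending on the structure at the parent $u$ of $v$, and one may need to carry auxiliary Hamilton-path statements in parallel to handle the ``semi-dominating set'' structure (subsets of $V(T')$ that dominate every vertex except possibly $v$) that arises naturally in the $\ell$-present layer.
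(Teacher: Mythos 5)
Your overall strategy (strip a pendant structure hanging from a deepest branch vertex, induct on the smaller tree, and lift the resulting Hamilton path) is the same as the paper's, but as written the proposal has two genuine gaps, both located exactly where you defer the work. First, you assert that the bare statement ``$\mc{D}(T')$ has a Hamilton path'' is too weak an inductive hypothesis and that one must prove a strengthened version with two prescribed endpoints. You never specify the canonical endpoints, never check that the prescription is consistent (since $\mc{D}(T)$ is bipartite of odd order, both endpoints of any Hamilton path must lie in the larger part, so any prescribed pair of endpoints must satisfy a parity constraint that has to be propagated through every subcase), and never carry out the endpoint bookkeeping that you yourself identify as the technical core. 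A Hamilton path between two \emph{prescribed} endpoints is a much stronger, Hamilton-connectedness-flavoured property, and nothing in the proposal shows it holds. Second, in the one-leaf-child case your reduction $T'=T-\ell$ pushes you outside the class of dominating graphs: the $\ell$-present layer is indexed by subsets of $V(T')$ that dominate every vertex except possibly $v$, so you would need a separate Hamilton-path theorem for the reconfiguration graph of these ``semi-dominating sets,'' proved in parallel by induction. You acknowledge this but neither formulate nor prove it.

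Both difficulties are avoidable, and the paper avoids them. Instead of gluing layers end-to-end, the paper lifts an \emph{arbitrary} Hamilton path of $\mc{D}(T')$ into the layer of dominating sets containing the deleted vertex and then absorbs the remaining dominating sets by local detours: an edge $F_t^vF_{t+1}^v$ of the lifted path is replaced by a short path that dips into the other layers and returns. Because each insertion is local, the bare existence statement is a sufficient inductive hypothesis and no endpoint control is needed. And instead of deleting only the leaf $\ell$ in the degree-two case, the paper deletes the pendant path on $\ell$ and $v$ together (Operation II); the extra dominating sets of $T$ are then parametrized by dominating sets of $T'-u$ that fail to dominate $T'$, each of which corresponds to a genuine dominating set $S\cup\{u\}$ of $T'$ at which a detour can be attached, so the argument never leaves ordinary dominating graphs. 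If you adopt these two devices (detour insertion, and removing the leaf together with its support vertex), your outline becomes the paper's proof; as it stands, the two deferred steps are real proof obligations, not routine bookkeeping.
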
 

\noindent
We also use a result of Baril and Vajnovszki~\cite{Baril} on Lucas strings to
characterize cycles whose dominating graphs have Hamilton paths.

\begin{theorem}\label{thm:Cpath}
For all integers $n\geq 3$, $\mc{D}(C_n)$ has a Hamilton path if and 
only if $n\not\equiv 0 \pmod 4$.
\end{theorem}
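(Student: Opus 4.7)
My plan is to establish the two directions of Theorem~\ref{thm:Cpath} separately.

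\emph{Necessity.} Suppose $4 \mid n$. The graph $\mc{D}(C_n)$ is bipartite with color classes indexed by the parity of a dominating set's cardinality, so any Hamilton path forces the two class sizes to differ by at most $1$. By Lemma~\ref{lem:Brouwer}, the total number of dominating sets of $C_n$ is odd, so if $\mc{D}(C_n)$ admits a Hamilton path then the two class sizes must differ by exactly $1$. I would compute the signed sum $S(n) := \sum_{D}(-1)^{|D|}$, where $D$ ranges over the dominating sets of $C_n$, via a transfer-matrix argument. Identifying each dominating set with its characteristic vector puts dominating sets of $C_n$ in bijection with cyclic binary strings of length $n$ having no $000$ substring. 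A $4 \times 4$ transfer matrix on the pair states $\{00, 01, 10, 11\}$, with a factor of $-1$ attached to each transition that introduces a $1$, then gives $S(n) = \operatorname{tr}(M^n)$. The eigenvalues of $M$ admit a clean closed form, from which I expect to read off $|S(n)| \geq 3$ precisely when $4 \mid n$, completing this direction.

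\emph{Sufficiency.} For $n \not\equiv 0 \pmod 4$ I would appeal to the Baril--Vajnovszki Gray code result~\cite{Baril}. Under complementation, dominating sets of $C_n$ correspond to cyclic binary strings with no three consecutive $1$s, a generalized Lucas-type family to which the Baril--Vajnovszki construction applies. Their Gray code lists every such string, changing one bit between consecutive strings; translated back, successive dominating sets differ by a single vertex addition or removal, yielding a Hamilton path in $\mc{D}(C_n)$. The main obstacle is this translation step: I must verify that a single-bit-flip in their code corresponds to a single vertex change in $\mc{D}(C_n)$, and that the conditions in~\cite{Baril} under which the Gray code exists are exactly $n \not\equiv 0 \pmod 4$. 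Any residual small values of $n$ can be settled by explicit construction.
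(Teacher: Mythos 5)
Your proposal is correct, and its sufficiency half coincides with the paper's argument: the paper likewise encodes dominating sets of $C_n$ as cyclic binary strings with no $000$ substring, observes that these are the bitwise complements of the Lucas strings $L_{n,3}$, and invokes the Baril--Vajnovszki minimal change list, which is a Gray code exactly when $n\not\equiv 0\pmod 4$; the translation step you flag (one bit flip equals one vertex added or removed) is immediate from the encoding. Where you genuinely diverge is the necessity direction. The paper folds this into the same citation, relying on the ``only if'' half of the Baril--Vajnovszki theorem, whereas you give a self-contained parity obstruction. Your transfer-matrix computation works out exactly as cleanly as you hope: with states $\{00,01,10,11\}$ and weight $(-1)^c$ on the transition appending bit $c$, the characteristic polynomial of $M$ is $x(x+1)(x^2+1)$, so $S(n)=\operatorname{tr}(M^n)=(-1)^n+i^n+(-i)^n$, which equals $3$ when $4\mid n$ and $-1$ otherwise. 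Hence when $4\mid n$ the two parity classes of $\mc{D}(C_n)$ differ in size by $3$, ruling out a Hamilton path, while when $4\nmid n$ they differ by $1$, consistent with Brouwer's odd count (Lemma~\ref{lem:Brouwer}). Your version buys a necessity proof that is elementary, independent of the particular Gray-code construction, and pins down the exact size of the bipartition imbalance; the paper's version buys brevity by outsourcing both directions to a single reference.
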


\section{Two graph operations and their effects on the dominating graph}\label{sec:graph-ops}

We begin this section by introducing two operations on a graph $H$.
We then prove that if $H'$ is a graph obtained 
from $H$ by applying either operation, and $\mathcal{D}(H')$ has a Hamilton path,
then $\mathcal{D}(H)$ has a Hamilton path. 
This is later used to show that the dominating graph of any tree has a Hamilton path. 

\begin{reduc1}
Let $H$ be a graph with vertices $u, v$ and $x$
such that $N_{H}(u)= N_{H}(v)=\{x\}$.
We say that $H':=H-v$ is obtained from $H$ by Operation I. 
\end{reduc1}

\begin{figure}[htbp]
\begin{center}
\includegraphics[width=0.8\textwidth]{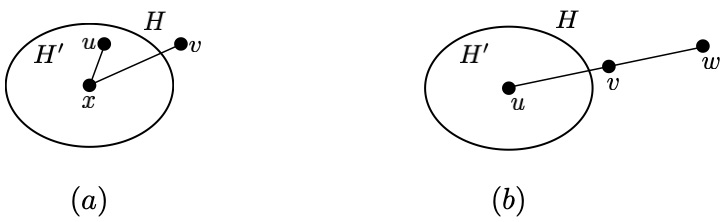}
\end{center}
\caption{(a) Operation I and (b) Operation II}
\label{fig:reductions}
\end{figure}

\begin{reduc2}
Let $H$ be a graph with vertices $u, v$ and $w$ such that
$N_{H}(v)=\{u, w\}$ and $N_{H}(w)=\{v\}$.
We say that $H':=H-w-v$ is obtained from $H$ by Operation II.
\end{reduc2}

\begin{lemma}\label{thm:reduction1}\rm
Let $H$ and $H'$ be graphs such that $H'$ is obtained from $H$ by Operation I. 
If $\mathcal{D}(H')$ has a Hamilton path,
then $\mathcal{D}(H)$ has a Hamilton path.
\end{lemma}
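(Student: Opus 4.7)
My plan is to lift the given Hamilton path $P' = D'_1, D'_2, \ldots, D'_m$ in $\mathcal{D}(H')$ to a Hamilton path in $\mathcal{D}(H)$, using the twin-leaf symmetry of $u$ and $v$. The first step is to classify the vertices of $\mathcal{D}(H)$: every dominating set $D$ of $H$ is either of the form $D'_i \cup \{v\}$ for some $i$, which is automatically a dominating set of $H$, or equal to some $D'_i$ with $x \in D'_i$ (in which case $v$ is dominated by $x$). The point is that $D \setminus \{v\}$ is always a dominating set of $H'$: if $v \in D$ and $x \notin D$, then $u$ must lie in $D$ to dominate itself, and this $u$ in turn dominates $x$ in $H'$. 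Setting $A = \{i : x \in D'_i\}$, I would picture the vertices of $\mathcal{D}(H)$ as a grid with top row $D'_1 \cup \{v\}, \ldots, D'_m \cup \{v\}$ and bottom row consisting of $D'_i$ for $i \in A$; this grid carries vertical edges $D'_i - (D'_i \cup \{v\})$ in $A$-columns, horizontal top edges inherited from adjacencies of $P'$, and horizontal bottom edges between consecutive indices $i, i+1 \in A$.

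The main construction traverses $\tilde P = D'_1 \cup \{v\}, \ldots, D'_m \cup \{v\}$ along the top while inserting detours through the bottom at maximal runs of indices in $A$. For an even-length maximal run $[i, j] \subseteq A$ of $P'$, I would replace the corresponding top subpath by the snake
$D'_i \cup \{v\} \to D'_i \to D'_{i+1} \to D'_{i+1} \cup \{v\} \to D'_{i+2} \cup \{v\} \to D'_{i+2} \to \cdots \to D'_j \cup \{v\}$,
which covers both rows of the run and re-enters the top walk at $D'_j \cup \{v\}$. A key preliminary observation is that no interior run of $A$ in $P'$ has length one: if $i \in A$ with both neighbors in $B := \{1, \ldots, m\} \setminus A$, then both edges of $P'$ incident to $D'_i$ toggle $x$, forcing $D'_{i-1} = D'_{i+1}$ and contradicting that $P'$ is a path. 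Hence every interior $A$-run has length at least two, so interior runs of length exactly two are handled by the snake above.

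The hardest part will be handling odd-length interior $A$-runs of length at least three. A pure $2 \times k$ grid with $k$ odd admits no Hamilton path from its top-left corner to its top-right corner, by a bipartite parity argument, so the straightforward snake does not suffice to both enter and exit such a run on the top. I expect the resolution to use additional edges in $\mathcal{D}(H)$: either edges between non-consecutive $D'_i, D'_j$ in $P'$ that still differ by a single element (these give ``shortcuts'' within the bottom row), or a local modification of $P'$ via exchange arguments to ensure that each odd-length $A$-run can be positioned at (or transformed into) an endpoint configuration of the Hamilton path, where the path is allowed to terminate on the bottom row. Working out this case analysis so that every configuration of runs is handled is the technical crux of the proof.
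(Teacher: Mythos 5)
Your setup is correct and matches the paper's: the dominating sets of $H$ split into the ``top row'' $\{D'_i\cup\{v\}: 1\le i\le m\}$ and the ``bottom row'' $\{D'_i : i\in A\}$ with $A=\{i: x\in D'_i\}$, your justification that $D\setminus\{v\}$ always dominates $H'$ is right, your even-run snake is valid, and your observation that no interior maximal $A$-run has length one is a correct (and genuinely useful) parity fact. But the proof is incomplete exactly where you say it is: odd interior $A$-runs of length at least $3$ are a real obstruction to the pure grid snake (your $2\times k$ parity argument is sound), and the two escape routes you sketch do not work as stated. Arbitrary ``shortcut'' edges between non-consecutive $D'_i,D'_j$ in the bottom row need not exist, and you cannot in general relocate or absorb every odd interior run into an endpoint of the Hamilton path, since $P'$ may contain many such runs. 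So as written this is a plan with the crux missing, not a proof.

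The idea that closes the gap is to refine the bottom row by membership of $u$. Write $Z=\{D'_i : i\in A\}$ and split it as $X'=\{F\in Z: u\notin F\}$ and $B'=\{F\in Z: u\in F\}$; since every $F\in Z$ contains $x$, the map $F\mapsto F\cup\{u\}$ is a bijection from $X'$ onto $B'$, and $F$ and $F\cup\{u\}$ are adjacent in $\mathcal{D}(H)$. One then runs detours only over maximal runs of indices $t$ with $D'_t\in X'$, replacing the top edge between columns $t$ and $t+1$ by the six-vertex path
\[
D'_t\cup\{v\},\; D'_t,\; D'_t\cup\{u\},\; D'_{t+1}\cup\{u\},\; D'_{t+1},\; D'_{t+1}\cup\{v\},
\]
which harvests two vertices of $X'$ and two of $B'$ at once; the $B'$ vertices are collected via these $u$-edges rather than in their own columns, which is what defeats the parity obstruction. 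For a maximal $X'$-run of odd length ending at column $j<m$, maximality forces $D'_{j+1}=D'_j\cup\{u\}$ (the only way to leave $X'$ without destroying domination of $u$ in $H'$ is to add $u$), so the one leftover edge can be replaced by the four-vertex path $D'_j\cup\{v\},\, D'_j,\, D'_j\cup\{u\},\, D'_j\cup\{u,v\}=D'_{j+1}\cup\{v\}$; if $j=m$ one simply ends the Hamilton path in the bottom row. Your coarser decomposition by $A$ alone does not see this forced structure, which is why the odd case looks intractable from where you stopped.
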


\begin{proof}
Suppose $H$ and $H'$ are graphs as in the statement of the Proposition.
Then there are vertices $u,v,x\in V(H)$ such that $N_{H}(u)=N_{H}(v)=\{ x\}$,
and $H':=H-v$.
To simplify notation, we define $G$ and $G'$ to be the dominating graphs of
$H$ and $H'$, respectively, i.e.,
$G:= {\mathcal D}(H)$ and $G':= {\mathcal D}(H')$.
Recall that each vertex of $G$ represents a dominating set of $H$, so we name
each vertex of $G$ with the name of the corresponding subset of $V(H)$,
and use the same convention for vertices of $G'$ and dominating sets of $H'$.

Let $n=|V(G')|$, and
let $P_{G'}:=F_1, F_2, \ldots, F_n$ be a Hamilton path in $G'$.
For each $i$, $1\leq i \leq n$, define $F^v_i := F_i\cup \{v\}$,
and for each $i$, $1\leq i \leq n$, with $u\not\in F_i$, define
\[ F^u_i := F_i\cup \{u\}, \text{ and } F^{uv}_i := F_i\cup \{u,v\}.  \]
Now consider the dominating sets of $H$.
These can be partitioned into those that contain $v$ and those that do
not contain $v$.
Because $N_{H}(u)=\{x\}=N_{H}(v)$,
the dominating sets of $H$ that contain $v$ are precisely
\[ W:=\{F_i^v ~|~ 1\leq i \leq n\},\]
while the dominating sets of $H$ that do not
contain $v$ are
\[ Z:=\{F_i ~|~ x\in F_i, 1\leq i\leq n\}.\]
Now consider the following subsets of $V(G)$.
\begin{eqnarray*}
X' &:=& \{F_i ~|~ x\in F_i, \, u\not\in F_i, \,  1\leq i\leq n\},\\
B' &:=& \{F_i ~|~ \{x,u\}\subseteq F_i, \, 1\leq i\leq n\}
= \{F_i^u ~|~ F_i \in X'\},\\
X &:=& \{F_i^v ~|~ x\in F_i, \, u\not\in F_i, \, 1\leq i\leq n\}
= \{F_i^v  ~|~ F_i \in X'\}, \\
B &:=& \{F_i^v ~|~ \{x,u\}\subseteq F_i, \, 1\leq i\leq n\}
= \{F_i^{uv} ~|~ F_i \in X'\}, \\
U &:=& \{F_i^v ~|~ u\in F_i, \, x\not\in F_i, \, 1\leq i\leq n\}.
\end{eqnarray*}

\begin{figure}[htbp]
\[ \includegraphics[width=0.45\textwidth]{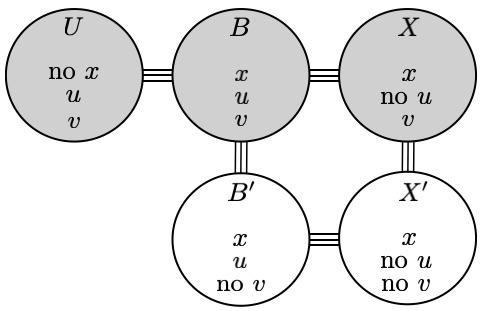}\]
\caption{A partition of $V(G)=V(\mathcal{D}(H))$ into sets
$\{U,B,X,B',X'\}$, with the parts shaded grey corresponding to
a subgraph of $G$ isomorphic to 
$G'=\mathcal{D}(H')$.}\label{Fig_ab}
\end{figure}

It is routine to verify that $\{X',B'\}$ is a partition of $Z$ while
$\{ X, B, U\}$ is a partition of $W$,
and hence
$\{U, B, X, B', X' \}$ is a partition of $V(G)$.
Furthermore, the definitions of $B', X$ and $B$ in terms of $X'$ make it clear that
\[ G[B']\cong G[X]\cong G[B] \cong G[X'].\]
Finally, the definition of $W$ ensures that $G[W]=G[X\cup B\cup U]\cong G'$.
It follows that $P:=F_1^v, F_2^v, \ldots, F_n^v$ is a path in $G$ and
also a Hamilton path of
$G[X \cup B \cup U]$.
We now extend $P$ to a Hamilton path of $G$.

Let $Q:=F_i^v, F_{i+1}^v, \ldots, F_j^v$ be a maximal subpath of $P$ in $G[X]$.
There are two cases to consider, depending on the parity of $j-i+1$ (the
number of vertices in $Q$).
First suppose that $j-i+1$ is even.
Then for each $t\in \{i, i+2, \ldots, j-1\}$, replace the edge $F_t^v F_{t+1}^v$
of $P$ by the path
\[ F_t^v, F_t, F_t^u, F_{t+1}^u, F_{t+1}, F_{t+1}^v.\]
Since $G[B']\cong G[X]$, this replacement results in a path in $G$.

Now assume that  $j-i+1$ is odd.
In this case, for each $t\in \{i,i+2, \ldots, j-2\}$,
replace the edge $F_t^v F_{t+1}^v$ of $P$ by the path
\[ F_t^v, F_t, F_t^u, F_{t+1}^u, F_{t+1}, F_{t+1}^v.\]
Again, since $G[B']\cong G[X]$, this replacement results in a path in $G'$.
If $j=n$, replacing vertex $F_n^v$ in $P$ by the path $F_n^v, F_n, F_n^u$ results in a path.
Otherwise, $j<n$, so the maximality of $Q$ implies $F_{j+1}^v \in B$, and hence
$F_{j+1}^v = F_j^{uv}$.
Replacing the edge $F_j^v F_{j+1}^v$ (which equals $F_j^v  F_j^{uv}$) of $P$
with the path
\[ F_j^v,F_j,F_j^u,F_j^{uv}\]
ensures the result is a path in $G$.

Since $G[B']\cong G[X']\cong G[X]$,
making these replacements for each maximal subpath $Q$ of $P$ in $G[X]$ incorporates
all the vertices of $X'$ and $B'$ into the resulting path and produces a 
Hamilton path of $G=\mc{D}(H)$.
\end{proof}

\begin{lemma}\label{thm:reduction2}\rm
Let $H$ and $H'$ be graphs such that $H'$ is obtained from $H$ by Operation II. 
If $\mathcal{D}(H')$ has a Hamilton path, then $\mathcal{D}(H)$ has a Hamilton path.
\end{lemma}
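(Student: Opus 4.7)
The plan is to parallel the argument for Lemma~\ref{thm:reduction1}: given a Hamilton path $P' = F_1, F_2, \ldots, F_n$ of $G' := \mathcal{D}(H')$, I will build a Hamilton path of $G := \mathcal{D}(H)$ by splicing local segments together. The critical first step is to identify the dominating sets of $H$. Because $N_H(w) = \{v\}$, every dominating set of $H$ must contain $v$ or $w$, so $V(G)$ splits, according to its intersection with $\{v, w\}$, into the sets
\begin{align*}
W_w &:= \{D' \cup \{w\} : D' \in \mathcal{D}(H')\},\\
W_v &:= \{D' \cup \{v\} : D' \in \mathcal{F}\},\\
W_{vw} &:= \{D' \cup \{v, w\} : D' \in \mathcal{F}\},
\end{align*}
where $\mathcal{F}$ denotes the family of subsets $D' \subseteq V(H')$ that dominate $V(H')\setminus\{u\}$ in $H'$. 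The requirement to dominate $u$ can be dropped because $v$ already handles it in $H$ whenever $v \in D$, so $\mathcal{D}(H') \subseteq \mathcal{F}$; write $\mathcal{E} := \mathcal{F} \setminus \mathcal{D}(H')$ for the ``near-dominating'' sets of $H'$ whose only undominated vertex is $u$.

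The key observation is that the map $E \mapsto E \cup \{u\}$ embeds $\mathcal{E}$ injectively into $\mathcal{D}(H')$: adjoining $u$ to $E \in \mathcal{E}$ fixes the lone defect, so $E \cup \{u\} = F_{i(E)}$ for a uniquely determined index $i(E)$. For each index $i$, the three vertices $F_i \cup \{w\},\ F_i \cup \{v, w\},\ F_i \cup \{v\}$ form a short path in $G$ (moves ``add $v$'' and ``remove $w$''), and when $i = i(E)$ the equality $F_i = E \cup \{u\}$ supplies the two extra edges $F_i \cup \{v, w\} \sim E \cup \{v, w\}$ and $E \cup \{v\} \sim F_i \cup \{v\}$ that widen this short path into the five-vertex gadget
\[ L_i := F_i \cup \{w\},\ F_i \cup \{v, w\},\ E \cup \{v, w\},\ E \cup \{v\},\ F_i \cup \{v\}. \]
In either case $L_i$ has endpoints $F_i \cup \{w\}$ and $F_i \cup \{v\}$, and once $i$ varies the union $L_1 \cup \cdots \cup L_n$ visits every vertex of $W_w \cup W_v \cup W_{vw}$ exactly once.

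To finish, I concatenate $L_1, L_2, \ldots, L_n$ by orienting $L_i$ from $F_i \cup \{w\}$ to $F_i \cup \{v\}$ when $i$ is odd and in reverse when $i$ is even. Consecutive segments then join through the edges $F_i \cup \{v\} \sim F_{i+1} \cup \{v\}$ for odd $i$ and $F_i \cup \{w\} \sim F_{i+1} \cup \{w\}$ for even $i$, both of which are valid because $F_i \sim F_{i+1}$ in $G'$ and $\mathcal{D}(H') \subseteq \mathcal{F}$. The resulting walk is a Hamilton path of $G$.

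The main obstacle I foresee is pinning down $\mathcal{F}$ precisely and verifying that the injective map $E \mapsto E \cup \{u\}$ provides a conflict-free assignment of each $E \in \mathcal{E}$ to a unique index, so that every vertex of $W_v \cup W_{vw}$ indexed by $\mathcal{E}$ is caught by exactly one gadget insertion and none is double-counted. The remaining bookkeeping --- the parity of $n$, identifying the endpoints of the resulting path, and reversing gadgets when needed --- should then follow routinely.
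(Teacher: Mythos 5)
Your proposal is correct and follows essentially the same route as the paper's proof: the same partition of the dominating sets of $H$ according to their intersection with $\{v,w\}$, the same identification of the extra sets as near-dominating sets of $H'$ whose only defect is $u$ (the paper's set $J$ is your $\mathcal{E}$), the same injective map $E\mapsto E\cup\{u\}$, the same tripling of each $F_i$ into a three-vertex segment, and the same four-edge gadget insertion. The bookkeeping you flag at the end (injectivity, conflict-free assignment, alternating orientations) is already fully justified by what you wrote, so no gap remains.
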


\begin{proof} 
Suppose $H$ and $H'$ are graphs as in the statement of the Proposition.
Then there exist vertices $u,v,w\in V(H)$ such that $N_{H}(v)=\{u,w\}$,
$N_{H}(w)=\{v\}$, and $H':=H-w-v$.
As before, we define $G$ and $G'$ to be the dominating graphs of
$H$ and $H'$, respectively, i.e.,
$G:= {\mathcal D}(H)$ and $G':= {\mathcal D}(H')$.

Let $Y$ be a dominating set of $H'$.
Since neither $v$ nor $w$ is a vertex of $H'$, $Y\cap\{v,w\}=\emptyset$,
so we define
\[ Y^v:=Y\cup\{v\}, Y^w:=Y\cup\{w\},\mbox{ and } Y^{vw}:=Y\cup\{v,w\},\]
and let
\[ A:=\{Y^v, Y^w, Y^{vw} ~|~ Y\in V(G')\}\]
Then $A$ consists of dominating sets of $H$.
The dominating sets of $H$ that are {\em not} in $A$ can be described as
follows.
Let
\[ J:= \{ S\subseteq V(H') ~|~ S \mbox{ is a dominating set of } H'-u
\mbox{ and } S\cap N_{H'}[u] = \emptyset\},\]
i.e., $J$ consists of the dominating sets of $H'-u$ that are {\em not} dominating
sets of $H'$.  
It follows that if $S\in J$, then $S\cap\{u,v,w\}=\emptyset$,
so we define
\begin{eqnarray*}
& S^u:=S\cup\{u\}, S^v:=S\cup\{v\}, S^{uv}:=S\cup\{u,v\}, & \\
& S^{vw}:=S\cup\{v,w\},\mbox{ and } S^{uvw}:=S\cup\{u,v,w\}.
\end{eqnarray*}
We now let
\[ B:=\{S^v, S^{vw} ~|~ S \in J\}.\]
It is routine to verify that $\{ A,B\}$ is a partition of the dominating sets of $H$.

Let $n=|V(G')|$, and let $P_{G'}=F_1, F_2, \ldots, F_n$ be a Hamilton path in $G'$.
Note that $F_i\cap\{v,w\}=\emptyset$.
By Lemma~\ref{lem:Brouwer}, $n$ is odd, so replacing vertex $F_i$ of 
$P_{G'}$ with the path 
$F_i^v, F_i^{vw}, F_i^w$ when $i$ is odd,  
and with the path $F_i^w, F_i^{vw}, F_i^v$ when $i$ is even produces the path
\[ P := F_1^v, F_1^{vw}, F_1^w, F_2^w, F_2^{vw}, F_2^v, \ldots , F_n^v, F_n^{vw}, F_n^w \]
in $G$.
Since $P$ consists of all the vertices in $A$, 
what remains is to incorporate the vertices of $B$ into this path.

First notice that, for each $S\in J$, $S^u$ is a dominating set of $H'$,
and hence $S^u=F_i$ for some $i$, $1\leq i\leq n$.
Furthermore, it is clear that if $S_1, S_2\in J$, then $S_1\neq S_2$ if and only if
$S_1^u\neq S_2^u$.

We now proceed as follows.  
For $S\in J$, let $t\in \{ 1, \ldots, n\}$ be the index for which
$S^u=F_t$.
In the path $P$, we either have the edge $F^v_t F^{vw}_t$
(which is the same as $S^{uv}S^{uvw}$) or the edge $F^{vw}_t F^{v}_t$
(which is the same as $S^{uvw}S^{uv}$).
If $P$ contains $F^v_t F^{vw}_t$, replace it with the path
\[F^v_t=S^{uv}, S^v, S^{vw}, S^{uvw}=F^{vw}_t.\]
Otherwise, replace $F^{vw}_t F^{v}_t$ with the path
\[ F^{vw}_t=S^{uvw}, S^{vw}, S^v, S^{uv}=F^{v}_t.\]
Repeating this for each $S\in J$ results in a 
path containing all the vertices of $A\cup B=V(G)$, and hence all 
the dominating sets of $H$.
Therefore, $G=\mc{D}(H)$ has a Hamilton path. 
\end{proof}

Together, the two preceding propositions imply the following.

\begin{cor}\label{cor:successive}
Let $H$ be a graph and let $H'$ be a graph obtained from $H$ by 
applying any sequence of the Operations I and II.
If $\mathcal{D}(H')$ has a Hamilton path then $\mathcal{D}(H)$ has a Hamilton path.
\end{cor}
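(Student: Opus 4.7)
The plan is to prove this by a straightforward induction on the length of the sequence of operations used to produce $H'$ from $H$, using Lemma~\ref{thm:reduction1} and Lemma~\ref{thm:reduction2} as the inductive engine. Since each operation is a single step, the previous two lemmas already handle the case of sequences of length one, and chaining them together is essentially all that is needed.

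More precisely, suppose $H'$ is obtained from $H$ by applying a sequence of $k$ operations, each of which is either Operation I or Operation II. I would let $H = H_0, H_1, \ldots, H_k = H'$ be the graphs produced along the way, so that each $H_{i+1}$ is obtained from $H_i$ by a single Operation I or Operation II. I would then induct on $k$. The base case $k=0$ is trivial since $H = H'$. For the inductive step, assume $\mathcal{D}(H')$ has a Hamilton path. Working backwards from $i=k$, since $H_k$ is obtained from $H_{k-1}$ by a single operation, Lemma~\ref{thm:reduction1} (if the final operation was Operation I) or Lemma~\ref{thm:reduction2} (if it was Operation II) yields a Hamilton path in $\mathcal{D}(H_{k-1})$. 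Applying the inductive hypothesis (or iterating this argument) to the sequence of length $k-1$ from $H$ to $H_{k-1}$ delivers a Hamilton path in $\mathcal{D}(H) = \mathcal{D}(H_0)$.

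There is no real obstacle here, since Lemmas~\ref{thm:reduction1} and~\ref{thm:reduction2} do all of the combinatorial work; the only thing to verify is that the hypothesis ``$\mathcal{D}(H')$ has a Hamilton path'' propagates backwards along the sequence of reductions, which is exactly what each lemma guarantees at a single step. The entire proof amounts to a few lines invoking the two preceding lemmas iteratively.
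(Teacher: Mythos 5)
Your proposal is correct and matches the paper's approach exactly: the paper simply states that the corollary follows from the two preceding lemmas, and your induction on the number of operations, applying Lemma~\ref{thm:reduction1} or Lemma~\ref{thm:reduction2} at each step, is precisely the omitted argument.
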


\section{Hamilton paths in dominating graphs of trees}\label{sec:trees}

A particular class of graphs to which we can apply Corollary~\ref{cor:successive}
is trees.
Let $T$ be a tree and let ${\mathcal D}(T)$ be the dominating graph of $T$.  
To prove that ${\mathcal D}(T)$ has a Hamilton path (Theorem~\ref{thm:Tpath}), we
use an iterative process for constructing such a path.
Doing so requires the following lemma to deconstruct an arbitrary tree on $n\geq 3$ 
vertices using Operations~I and~II.

\begin{lemma}\rm\label{thm:reductions}
If $T$ is a tree on $n\geq 3$ vertices, then one of the following holds: 
\begin{enumerate}[label={(\arabic*)}]
\item there exist distinct $u,v,x\in V(T)$ with $N_{T}(u)=N_{T}(v)=\{x\}$, or
\item there exist distinct $u,v,w\in V(T)$ with $N_{T}(v)=\{u,w\}$ and 
$N_{T}(w)=\{v\}$.
\end{enumerate}
\end{lemma}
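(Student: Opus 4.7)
The plan is to use a longest path argument. I will pick a longest path $P = x_1, x_2, \ldots, x_k$ in $T$. Since $T$ has at least $3$ vertices and is connected, its diameter is at least $2$, so $k \geq 3$ and in particular $x_1, x_2, x_3$ are distinct. The endpoint $x_1$ is a leaf; if it were not, any other neighbor could be appended to $P$, contradicting maximality.

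Next I would analyze the neighbors of $x_2$. First I would establish the key observation that every neighbor $y$ of $x_2$ different from $x_3$ must be a leaf: if $y$ had any neighbor $z \neq x_2$, then $z, y, x_2, x_3, \ldots, x_k$ would be a longer path than $P$ (note $z \neq x_3$ since $T$ is a tree and thus has no cycle through $x_2, x_3, y$), contradicting the choice of $P$. In particular, $x_1$ is a leaf via this observation as well.

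With this in hand, I would split into two cases based on $\deg_T(x_2)$. If $\deg_T(x_2) \geq 3$, then $x_2$ has at least two neighbors other than $x_3$, and by the previous paragraph each of these is a leaf; taking $u, v$ to be any two such leaves and $x = x_2$ yields conclusion (1). If instead $\deg_T(x_2) = 2$, then $N_T(x_2) = \{x_1, x_3\}$ with $x_1$ a leaf, so setting $v = x_2$, $w = x_1$, $u = x_3$ yields $N_T(v) = \{u, w\}$ and $N_T(w) = \{v\}$, which is conclusion (2).

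The argument is entirely routine; there is no real obstacle. The only minor subtlety is making sure the three vertices produced in each case are distinct, which is immediate from the fact that they lie on a path (or are leaves of $x_2$ distinct from each other). The edge case $n = 3$, where $T = P_3$, satisfies both conclusions simultaneously and is handled by the second case.
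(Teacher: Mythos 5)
Your proof is correct, but it takes a different route from the paper. The paper proves the lemma by induction on $n$: it deletes a leaf $z$, applies the induction hypothesis to $T-z$ to obtain a configuration of type (1) or (2) there, and then runs a case analysis on whether $z$ attaches to one of the special vertices, converting between the two configuration types as needed. You instead give a direct extremal argument: take a longest path $x_1,\ldots,x_k$ and observe that every neighbour of $x_2$ other than $x_3$ must be a leaf, so that $\deg(x_2)\geq 3$ yields two leaves sharing the neighbour $x_2$ (conclusion (1)) while $\deg(x_2)=2$ yields the pendant-path configuration $x_3,x_2,x_1$ (conclusion (2)). All the details check out: the key observation is justified correctly (the extension $z,y,x_2,\ldots,x_k$ is a genuine path because any coincidence of $z$ or $y$ with a vertex of $P$ would create a cycle), the two cases on $\deg(x_2)$ are exhaustive since $x_1,x_3\in N(x_2)$, and distinctness of the produced vertices is immediate. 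Your argument is non-inductive and self-contained, and it locates the desired configuration explicitly at the penultimate vertex of a longest path --- the standard ``diametral path'' trick --- which is arguably cleaner than the paper's case analysis on where the deleted leaf reattaches. The paper's induction, on the other hand, mirrors the iterative leaf-by-leaf deconstruction that is used immediately afterwards in the proof of Theorem \ref{thm:Tpath}, so it fits the surrounding narrative; but as a proof of the lemma itself, either approach is complete and of comparable length.
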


\begin{proof}
The proof is by induction on $n$.
When $n=3$, then $T\cong P_3$ and the result is obvious.

Suppose $n\geq 4$.
Let $z\in V(T)$ be a vertex of degree one, and let $T'=T-z$.
By the induction hypothesis, $T'$ satisfies (1) or (2) of the
statement of the Lemma.

First suppose that $T'$ satisfies (1), and let $u,v,x\in V(T')$ with 
$N_{T'}(u)=N_{T'}(v)=\{x\}$.
If $N_T(z)\not\subseteq\{u,v\}$, then $N_{T}(u)=N_{T}(v)=\{x\}$,
and $T$ satisfies (1).
Otherwise, $N_T(z)\subseteq\{u,v\}$ and we may assume, without loss of generality
that, $N_T(z)=\{u\}$.
We now have $z, u, x\in V(T)$ with $N_{T}(u)=\{z,w\}$ and 
$N_{T}(z)=\{u\}$, so $T$ satisfies (2).

Now suppose that $T'$ satisfies (2), and let 
$u,v,w\in V(T')$ with $N_{T'}(v)=\{u,w\}$ and $N_{T}(w)=\{v\}$.
If $N_T(z)\not\subseteq\{v,w\}$, then $N_{T}(v)=\{u,w\}$ and $N_{T}(w)=\{v\}$,
so $T$ satisfies (2).
Otherwise, $N_T(z)=\{v\}$ or $N_T(z)=\{w\}$.
There are two cases to consider.
If $N_T(z)=\{v\}$, then $z,v,w\in V(T)$ and 
$N_{T}(w)=N_{T}(z)=\{v\}$, and thus $T$ satisfies (1).
If $N_T(z)=\{w\}$, then $z,v,w\in V(T)$, $N_{T}(w)=\{v,z\}$ and 
$N_{T}(z)=\{w\}$, so $T$ satisfies (2).
\end{proof}
We are now in a position to prove our main result about trees, first stated in the 
Introduction.
\begin{thm:Tpath}
For any tree $T$, $\mathcal{D}(T)$ has a Hamilton path.
\end{thm:Tpath}

\begin{proof}
Let $P_i$ denote the path on $i\geq 1$ vertices.
If $|V(T)|\leq 2$, then $T\cong P_1$ or $T\cong P_2$.
Since $\mc{D}(P_1)\cong P_1$ and $\mc{D}(P_2)\cong P_3$, 
$\mathcal{D}(T)$ has a Hamilton path.
If $|V(T)|\geq 3$, then by Lemma~\ref{thm:reductions}, we can repeatedly apply 
Operations I and II to $T$ to obtain a tree $T'$ with $|V(T')|\leq 2$. 
Since $\mathcal{D}(T')$ has a Hamilton path,
it follows from Corollary~\ref{cor:successive} that
$\mathcal{D}(T)$ has a Hamilton path.
\end{proof}

\section{Hamilton paths in dominating graphs of cycles}\label{sec:cycles}

Let $C_n$ denote the cycle on $n\geq 3$ vertices.
In our original construction of a Hamilton path in $\mathcal{D}(C_n)$
if and only if $n\not\equiv 0\pmod 4$, we  encode
dominating sets of $C_n$ as binary strings, 
and construct a Gray code of this set of strings.
It was pointed out to us by T.\ M\"utze that the set of strings corresponding to the dominating sets of $C_n$ are the bitwise complements of the Lucas strings $L_{n,3}$.
Further, the Gray codes of Lucas strings are well-understood,
and thus we use them for the proof presented here.

For our purposes, the cycle on $n\geq 3$ vertices has vertex set 
$V(C_n)=\{ 0, 1, \ldots, n-1\}$ and edge set $\{ ij : i-j\equiv \pm 1\pmod n\}$. 
We encode  $X\subseteq V(C_n)$ as an $n$-digit binary string,
$x_0x_1\cdots x_{n-1}$, by setting $x_i=1$ if and only if $i\in X$, $0\leq i\leq n-1$.
It follows that $X\subseteq V(C_n)$, encoded by the binary string $x_0 x_1 \cdots x_{n-1}$,
is a dominating set of $C_n$ if and only if $x_{i-1} x_i x_{i+1} \neq 000$
for all $i$, $0\leq i\leq n-1$, where subscripts are taken modulo $n$. 
If $X$ and $Y$ are dominating sets of $C_n$, and are represented by binary 
strings $x_0 x_1 \cdots x_{n-1}$ and $y_0 y_1 \cdots y_{n-1}$,
respectively, then $X$ and $Y$ are adjacent in $\mc{D}(C_n)$ if and only if
$x_0 x_1 \cdots x_{n-1}$ and $y_0 y_1 \cdots y_{n-1}$ differ in exactly one bit.

The set of {\em Lucas strings of length $n$ and order $p\geq 1$},
denoted $L_{n,p}$, is the set of binary strings
of length $n$ that have no $p$ consecutive ones when the strings
are considered circularly.
In particular, the set  of Lucas strings of length $n$ and order $3$ is
\[ L_{n,3}=\{ x_0\cdots x_{n-1} ~|~ x_{i-1} x_i x_{i+1} \neq 111
\mbox{ for } 0\leq i\leq n-1 \mbox{ subscripts mod }n \}, \]
and is the set of bitwise complements of elements of $V(C_n)$.

Baril and Vajnovszki~\cite{Baril} construct an ordering of the
elements of $L_{n,p}$ called a {\em minimal change list} (see~\cite{Baril}),
denoted by $\mc{L}_{n,p}$. 
They prove $\mc{L}_{n,p}$ is a Gray code if and only if $n\not\equiv 0\pmod{(p+1)}$;
that is, every pair of consecutive strings of $\mc{L}_{n,p}$ differs
in exactly one bit.
Let $\widehat{\mc{L}}_{n,p}$ denote the sequence obtained by taking
bitwise complements of the strings of $\mc{L}_{n,p}$,
and note that $\mc{L}_{n,p}$ is a Gray code if and only if
$\widehat{\mc{L}}_{n,p}$ is a Gray code.
Since a Gray code of $\widehat{\mc{L}}_{n,3}$ corresponds
precisely to a Hamilton path in $\mc{D}(C_n)$,
this proves the following.

\begin{thm:Cpath}
For all integers $n\geq 3$, $\mc{D}(C_n)$ has a Hamilton path if and
only if $n\not\equiv 0 \pmod 4$.
\end{thm:Cpath}

A computationally inefficient construction of $\mc{L}_{n,p}$ 
(though not the construction used in the proof)
is described in~\cite{Baril}, and can easily be 
modified to directly construct a Hamilton 
path of $\mc{D}(C_n)$ whenever $n\not\equiv 0 \pmod 4$.
We illustrate this construction in Example~\ref{modified-lucas}.

\begin{example}\label{modified-lucas}\rm
Let $n=5$.
To construct a Hamilton path of $\mc{D}(C_5)$, begin with the
{\em reflected Gray code order} (due to Frank Gray~\cite{Gray})
of the set of all binary strings of length five.
The strings are organized in Figure~\ref{fig:gray}(a) to be read
from top to bottom and left to right.
Next, delete any string $x_0x_1x_2x_3x_4$ that has
$x_{i-1} x_i x_{i+1} =000$ for $0\leq i\leq 4$,
subscripts modulo $4$.
\begin{figure}
{\footnotesize
\begin{tabular}{|rrrrrrr|}
\hline
00000 &~& 01100 &~& 11000 &~& 10100 \\
00001 &~& 01101 &~& 11001 &~& 10101 \\
00011 &~& 01111 &~& 11011 &~& 10111 \\
00010 &~& 01110 &~& 11010 &~& 10110 \\
00110 &~& 01010 &~& 11110 &~& 10010 \\
00111 &~& 01011 &~& 11111 &~& 10011 \\
00101 &~& 01001 &~& 11101 &~& 10001 \\
00100 &~& 01000 &~& 11100 &~& 10000 \\ \hline
\multicolumn{7}{c}{(a)}
\end{tabular}
\hspace*{.25in}
\begin{tabular}{|rrrrrrr|}
\hline
      &~&       &~&       &~& 10100 \\
      &~& 01101 &~& 11001 &~& 10101 \\
      &~& 01111 &~& 11011 &~& 10111 \\
      &~& 01110 &~& 11010 &~& 10110 \\
      &~& 01010 &~& 11110 &~& 10010 \\
00111 &~& 01011 &~& 11111 &~& 10011 \\
00101 &~& 01001 &~& 11101 &~&       \\
      &~&       &~& 11100 &~&       \\ \hline
\multicolumn{7}{c}{(b)}
\end{tabular}}
\caption{Constructing a Hamilton path in $\mc{D}(C_5)$.}
\label{fig:gray}
\end{figure}
The reader can easily verify that remaining strings,
shown in Figure~\ref{fig:gray}(b), are still a Gray code
when read from top to bottom and left to right, and hence
describe a Hamilton path in $\mc{D}(C_5)$.
\end{example}

\section{Further results}

Corollary~\ref{cor:successive} applies more generally and can be used to
prove the existence of Hamilton paths in classes of dominating graphs
that are built up using dominating graphs that are known to have Hamilton paths.
These include complete graphs, paths, cycles $C_n$ when $n\not\equiv 0\pmod 4$,
certain complete bipartite graphs (Theorem~\ref{thm:oldresults}), and 
trees (Theorem~\ref{thm:Tpath}).
We include one example.

For any graph $H$, we say that $H$ is {\em reducible} to subgraph $H'$ if
$H'$ can be obtained from $H$ by applying a sequence of Operations~I and II
as described in Section~\ref{sec:graph-ops}.  
Suppose $G$ is a unicyclic graph whose unique cycle $C_n$ has length $n\geq 3$, 
where $n \not\equiv 0 \pmod 4$. 
Let $V(C_n)=\{ v_1, v_2, \ldots, v_n\}$, and let $T_i$ be the component (a tree) of 
$G-E(C_n)$ containing $v_i$ for some $i$, $1\leq i\leq n$.  
If $T_i$ is reducible to $v_i$ for each $i$, $1\leq i \leq n$, then by
Theorem~\ref{thm:Cpath} and Corollary~\ref{cor:successive}, 
$\mc{D}(G)$ has a Hamilton path. 

\section*{Acknowledgements}

This project started at the {\em Women in Graph Theory and its Applications
(WIGA) Workshop} at the Institute for Mathematics and its Applications in 2019.
WIGA is a research network  of the AWM, with support provided by the 
AWM ADVANCE Research Communities Program (funded by NSF-HDR-1500481, Career 
Advancement for Women Through Research-Focused Networks).
We gratefully acknowledge the support of the AWM, NSF and IMA.

We also thank Torsten M\"{u}tze for bringing our attention to the work of
Baril and Vajnovszki~\cite{Baril}.

\end{document}